\DeclareMathAlphabet{\mathpzc}{OT1}{pzc}{m}{it}
\newtheorem{thm}{Theorem}[section]
\newtheorem{lem}[thm]{Lemma}
\newtheorem{prop}[thm]{Proposition} 
\newtheorem{rem}[thm]{Remark}
\newtheorem{defn}[thm]{Definition}
\newcommand{\p}{\mathpzc{p}}
\newcommand{\bQ}{\mathbb Q}
\newcommand{\bC}{\mathbb C}
\newcommand{\A}{\mathbb A}
\newcommand{\Ga}{\mathbb G}
\title{On Separable $\A^2$ and $\A^3$-forms}
\author{Amartya Kumar Dutta{\footnote
{{\small{\it Stat-Math Unit, Indian Statistical Institute,}}
{\small{\it 203 B.T. Road, Kolkata 700 108, India.}}
{\small{\it e-mail : amartya.28@gmail.com}}}},
Neena Gupta{\footnote{{\small{\it Stat-Math Unit, Indian Statistical Institute,}}
{\small{\it 203 B.T. Road, Kolkata 700 108, India.}}
{\small{\it e-mail : neenag@isical.ac.in}}}} and
Animesh Lahiri{\footnote{{\small {\it Swami Vivekananda Research Centre,
Ramakrishna Mission Vidyamandira}}
{\small {\it P.O. Belur Math, Howrah 711202, India.}}
{\small {\it e-mail : 255alahiri@gmail.com }}}}}  
\begin{document}
\date{}
\maketitle

\begin{abstract}
In this paper, we will prove that any $\A^3$-form over a field $k$ of characteristic zero is 
 trivial provided it has a locally nilpotent derivation satisfying certain properties.
We will also show that the result of T. Kambayashi on the triviality of separable $\A^2$-forms over a field $k$ 
extends to $\A^2$-forms over any one-dimensional Noetherian domain containing $\bQ$.

\smallskip
  
\noindent
{\small {{\bf Keywords.} Polynomial Algebra, $\A^n$-form, Separable Extension, Locally Nilpotent Derivation.}}

\noindent
{\small {{\bf AMS Subject classifications (2010)}. Primary: 14R10; Secondary: 13B25; 12F10, 14R25, 13A50}}. 
\end{abstract}

\section{Introduction}
For any commutative ring $R$, we will use the notation $A=R^{[n]}$ to mean that $A$ is a polynomial ring in $n$ variables over $R$.
Now let $k$ be a field with algebraic closure $\bar{k}$ and $A$ be a $k$-algebra. 
We say that $A$ is an $\A^n$-form over $k$ if $A\otimes_k \bar{k}=\bar{k}^{[n]}$.
It is well-known that separable $\A^1$-forms are trivial (i.e., $k^{[1]}$) and that there exist 
non-trivial purely inseparable $\A^1$-forms over fields of positive characteristic.
An extensive study of such algebras was made by T. Asanuma in \cite{A}. 
T. Kambayashi established (\cite{K}) that separable $\A^2$-forms over a field $k$ are trivial. 
Over any field of positive characteristic,
the non-trivial purely inseparable $\A^1$-forms  can  be used to give examples of non-trivial 
$\A^n$-forms  for any integer $n>1$. 
However,  the problem of existence of non-trivial separable $\A^3$-forms 
over a field is still open in general. A few recent partial results on the triviality of 
separable $\A^3$-forms are mentioned in Remark \ref{partial}.

Now let $R$ be a ring containing a field $k$. An $R$-algebra $A$ is said to be an $\A^n$-form over $R$ with respect to $k$ if
$A\otimes_k \bar{k}=(R\otimes_k \bar{k})^{[n]}$, where $\bar{k}$ denotes the algebraic closure of $k$.
In \cite{Du}, A.K. Dutta investigated separable $\A^1$-forms over any ring $R$ containing a field $k$ 
and obtained Theorem \ref{AKD1} quoted below.  He also observed Theorem \ref{AKD2}   
for $\A^2$-forms over any PID containing $\bQ$. 

In this paper, we prove a partial result on separable $\A^3$-forms over a field $k$ (Theorem \ref{main}) and
extend the results on $\A^2$-forms (Theorems \ref{K} and \ref{AKD2})  to any
one-dimensional Noetherian $\bQ$-algebra (Theorem \ref{one-dim}) 
and to any $\bQ$-algebra having a fixed point free locally nilpotent derivation (Theorem \ref{nonregular}). 
After receiving a preprint of our paper, Prof. M. Miyanishi informed us that a part of Theorem \ref{main} 
has also  been obtained recently in \cite{GMM} by a different approach (see Remark \ref{partial} (4) for
a precise statement).

%We also observe that a recent result of M. Kahoui and M. Ouali (Theorem \ref{kahoui} below)
%yields another result on $\A^2$-forms (Corollary \ref{regular}).     

 \section{Preliminaries}

In this section we recall a few definitions and well-known results. 
All rings will be assumed to be commutative containing unity. 
\begin{defn}
{\em An $R$-algebra $A$ is said to be an {\it $\A^{r}$-fibration over $R$} if the following hold:
\begin{enumerate}
\item[\rm(i)] $A$ is finitely generated over $R$.
\item[\rm(ii)] $A$ is flat over $R$.
\item[\rm(iii)] $A\otimes_{R}{k(\p)}=k(\p)^{[r]}$ for every prime ideal $\p$ of $R$.
\end{enumerate}}
\end{defn}

\begin{defn}
{\em Let $k$ be a field of characteristic $p$ $(\geq{0})$ with algebraic closure 
$\bar{k}$ and $R$ a $k$-algebra. An $R$-algebra $A$ is said to be an {\it $\A^n$-form over $R$} (with respect to $k$) 
if $A\otimes_{k}\bar{k}={(R\otimes_{k}{\bar{k}})}^{[n]}$. }
\end{defn}

\begin{defn}
{\em Let $A=R^{[n]}$ and $F\in{A}$. $F$ is said to be a {\it residual coordinate} in $A$ if,
for every prime ideal $\p$ of $R$, $A\otimes_{R}{k(\p)}=k(\p)[\bar{F}]^{[n-1]}$, where $\bar{F}$ denotes the image of
$F$ in $A\otimes_{R}{k(\p)}$.} 
\end{defn}

\begin{defn}
{\em A derivation $D$ on a ring $A$ is said to be a {\it locally nilpotent derivation} if, for each $a \in A$,
there exists an integer $n \ge 0$ (depending on $a$), such that $D^n(a)=0$.  }
\end{defn}

\begin{defn}
{\em We say that a locally nilpotent derivation $D$ on a ring $A$ admits a {\it slice} if there exists $s$ in $A$
for which $D(s)=1$.}
\end{defn}

\begin{defn}
{\em A locally nilpotent derivation $D$ on a ring $A$ is said to be {\it fixed point free} if $(DA)=A$,
where $(DA)$ is the ideal of $A$ generated by $D(A)$.}
\end{defn}
 
 \begin{defn}
{\em Let $R$ be a ring and $D$ a locally nilpotent $R$-derivation on the polynomial ring $A= R^{[n]}$. 
Then the {\it rank of the derivation $D$}, denoted by ${\rm rk}~(D)$, is defined to be the least integer $i$ such that there exist
  $X_1, \dots, X_{n-i} \in {\rm ker ~}D$ satisfying $A= R[X_1, \dots, X_{n-i}]^{[i]}$.} 
\end{defn}

\medskip

\noindent

%At first we state a theorem of Miyanishi, which he proved in the paper \cite{Mi}, established the most important fact about locally nilpotent derivations in dimension three. 
%\begin{thm}\label{Mi}
%Let $k$ be a field of characteristic zero. Then the kernel of any nonzero locally nilpotent derivation of $k^{[3]}$ is a polynomial ring $k^{[2]}$. 
%\end{thm}
We first state Kambayashi's theorem (\cite[Theorem 3]{K}) on the triviality of separable $\A^2$-forms over fields.
\begin{thm}\label{K}
Let $k$ and $L$ be fields such that $L$ is separably algebraic over $k$. Suppose $A$ is a  
$k$-algebra such that $A\otimes_{k}L=L^{[2]}$. Then $A=k^{[2]}$.
\end{thm}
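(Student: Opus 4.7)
My plan is to prove the theorem by Galois descent, identifying $\A^2$-forms with a pointed cohomology set that can be computed using the amalgamated-product structure of the polynomial automorphism group.

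First I would reduce to a finite Galois extension. Since $A \otimes_k L \cong L^{[2]}$ is finitely generated over $L$, only finitely many elements of $L$ enter the structure constants and the isomorphism, so some finite separable subextension $L_0$ of $L/k$ already satisfies $A \otimes_k L_0 \cong L_0^{[2]}$. Replacing $L_0$ by its Galois closure inside $\bar k$ (available since $L/k$ is separable), I may assume without loss of generality that $L/k$ is finite Galois with group $G$. By faithfully flat descent, $A = (A \otimes_k L)^G$, where $G$ acts semi-linearly on $A \otimes_k L$ through its Galois action on the second factor. Isomorphism classes of such forms are then classified by the pointed set $H^1(G, \operatorname{Aut}_L(L[X,Y]))$, with the distinguished element corresponding to $A = k^{[2]}$.

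The main ingredient is the Jung--van der Kulk theorem, which represents $\operatorname{Aut}_L(L[X,Y])$ as an amalgamated free product $\operatorname{Aff}_2(L) *_{B(L)} \operatorname{Jonq}_2(L)$ of the affine group and the de Jonqui\`eres (triangular) subgroup over their intersection. Serre's cohomology-of-trees machinery then produces an exact sequence relating $H^1(G, \operatorname{Aut}_L(L[X,Y]))$ to $H^1(G, \operatorname{Aff}_2(L))$ and $H^1(G, \operatorname{Jonq}_2(L))$. Each of the latter groups is a successive semidirect-product extension built from $\Ga_a(L)$, $\Ga_m(L)$, and $\operatorname{GL}_2(L)$, so the vanishing of their $H^1$ follows from Hilbert 90 and the analogous statement for the additive group (which is automatic for a Galois action on the additive group of a field). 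Thus every cocycle becomes a coboundary, and $A \cong k^{[2]}$.

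The main obstacle will be the careful bookkeeping required for amalgamated-product cohomology in the non-abelian setting, in particular keeping track of the twisted forms of $\operatorname{GL}_2$ that can appear when one attempts to normalise a cocycle so that it lies in $\operatorname{Aff}_2$. A more concrete alternative, essentially Kambayashi's, sidesteps the explicit cohomology: choose a presentation $A \otimes_k L = L[X,Y]$, express each Galois conjugate $(\sigma X, \sigma Y)$ through its Jung--van der Kulk normal form, and induct on the ``length'' of that normal form to successively adjust the chosen coordinates until they become $G$-stable, yielding a pair $X', Y' \in A$ with $A = k[X', Y']$. Either formulation ultimately reduces the theorem to the triviality of $H^1$ with coefficients in $\Ga_a$ and $\Ga_m$, which holds because $L/k$ is separable.
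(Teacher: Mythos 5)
The paper does not actually prove Theorem \ref{K}: it is quoted as a known result from Kambayashi's article \cite{K}, so there is no internal proof to compare against. Your outline is, in essence, a reconstruction of the argument of that cited paper: reduce to a finite Galois extension $L/k$ with group $G$, identify $k$-forms of the plane with the pointed set $H^1(G,\operatorname{Aut}_L(L[X,Y]))$, and kill that set using the Jung--van der Kulk amalgamated product together with Hilbert 90. The approach is correct, and your preliminary reductions (finite generation of $A$, descent to a finite Galois subextension, effectivity of descent for affine algebras) are sound. The substantive work you have deferred sits in two places. First, the ``exact sequence'' from the tree machinery should be stated precisely as the surjectivity of $H^1(G,\operatorname{Aff}_2(L)) \sqcup H^1(G,\operatorname{Jonq}_2(L)) \to H^1(G,\operatorname{Aut}_L(L[X,Y]))$: a cocycle yields a splitting $G \to \operatorname{Aut}\rtimes G$ whose finite image fixes a vertex of the Bass--Serre tree, and one must check that the Galois action preserves the two factors of the amalgam (it does, since $\sigma$ fixes $X,Y$ and acts only on coefficients, hence preserves degrees). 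Second, the d\'evissage of $H^1(G,\operatorname{Aff}_2)$ and $H^1(G,\operatorname{Jonq}_2)$ requires vanishing of $H^1$ for all \emph{twists} of the successive kernels, not just the standard forms; since these kernels ($L^2$, the coefficient spaces $L[x]$, $\Ga_m(L)$, $\operatorname{GL}_2(L)$) are twisted only through semilinear actions on $L$-vector spaces or inner forms of $\operatorname{GL}_2$, the vanishing does follow from the normal basis theorem and generalized Hilbert 90, but this needs to be said explicitly --- it is exactly the ``bookkeeping'' you flag. With those two points filled in the proof is complete, and your ``concrete alternative'' of normalising the conjugates $(\sigma X,\sigma Y)$ by induction on the length of their normal form is the same argument with the tree language unwound.
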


\medskip

\noindent

We now state a theorem on separable $\A^1$-forms over rings and a theorem 
on $\A^{2}$-forms over a PID due to Dutta (\cite[Theorem 7 and Remark 8]{Du}). 

\begin{thm}\label{AKD1}
Let $k$ be a field, $L$ a separable field extension of $k$, $R$ a $k$-algebra and 
$A$ an $R$-algebra such that $A\otimes_{k}L$ is isomorphic to the symmetric algebra 
of a finitely generated rank one projective module over 
$R\otimes_{k}L$. 
Then $A$ is isomorphic to the symmetric algebra of a finitely generated rank one projective module over $R$. 
\end{thm}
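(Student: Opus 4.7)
The approach I would take is Galois descent combined with cohomological input from the normal basis theorem. The idea is to fix any $S$-algebra isomorphism between $A\otimes_k L$ and $\operatorname{Sym}_S(Q)$, where $S := R\otimes_k L$, transport the Galois action across it, and show that the translation obstruction to making the isomorphism Galois-equivariant vanishes in a suitable $H^1$.

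First I would reduce to the case where $L/k$ is a finite Galois extension with group $G$. Since $\operatorname{Sym}_S(Q)$ is finitely generated over $S$ and $Q$ is finitely presented, all the data $(Q,\Phi)$ descends to a finitely generated subextension $L_0\subseteq L$; replacing $L$ by the Galois closure of $L_0$ inside a separable closure of $k$, I may assume $L/k$ is finite Galois. (When $L/k$ is separable but not algebraic, a preliminary limit argument reduces to the algebraic case.) Writing $A_L := A\otimes_k L$, faithfully flat Galois descent identifies $A_L^G = A$ and $S^G = R$, and also transfers flatness and finite generation of $A_L$ over $S$ down to $A$ over $R$.

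Next, fix an $S$-algebra isomorphism $\Phi\colon A_L\xrightarrow{\sim}\operatorname{Sym}_S(Q)$ and transport the $G$-action. A ring automorphism of $\operatorname{Sym}_S(Q)$ restricting to $\sigma$ on $S$ is necessarily affine on $Q$, i.e.\ sends $Q$ into $S\oplus Q$, as one verifies Zariski-locally on $S$ (where $Q$ becomes free and the assertion reduces to the classical fact that $S_\mathfrak{p}[x]$ admits only affine automorphisms over $S_\mathfrak{p}$). Writing $\phi_\sigma|_Q = \beta_\sigma + \alpha_\sigma$ with $\alpha_\sigma\colon Q\to Q$ a $\sigma$-semilinear isomorphism and $\beta_\sigma\colon Q\to S$ a $\sigma$-semilinear map, the cocycle identity $\phi_{\sigma\tau}=\phi_\sigma\circ\phi_\tau$ splits into two relations: the multiplicative law $\alpha_{\sigma\tau}=\alpha_\sigma\circ\alpha_\tau$ turns $(Q,\alpha)$ into a Galois descent datum, yielding a rank-one projective $R$-module $P$ with $Q\cong P\otimes_R S$ as $G$-equivariant $S$-modules; and, after setting $\widetilde\beta_\sigma := \beta_\sigma\circ\alpha_\sigma^{-1}$, the $\widetilde\beta_\sigma$ assemble into a $1$-cocycle in $Z^1(G,P^\vee\otimes_R S)$.

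The key step is the vanishing $H^1(G,P^\vee\otimes_R S)=0$. By the normal basis theorem, $L\cong k[G]$ as $k[G]$-modules, so $S\cong R[G]$ as $R[G]$-modules; hence $P^\vee\otimes_R S\cong P^\vee\otimes_R R[G]$ is induced from the trivial subgroup, and such modules are cohomologically trivial by Shapiro's lemma. Thus $\widetilde\beta$ is a coboundary, and by adjusting $\Phi$ through a translation $S$-algebra automorphism of $\operatorname{Sym}_S(Q)$ I can arrange $\widetilde\beta=0$; in the adjusted presentation the $G$-action preserves the grading, and taking invariants term by term in the graded decomposition gives $A = A_L^G = \operatorname{Sym}_S(Q)^G = \operatorname{Sym}_R(P)$. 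The hardest piece, I expect, is the bookkeeping in the previous step: isolating the multiplicative datum $\alpha$ cleanly from the translation datum $\beta$ and recognizing the latter as a $1$-cocycle for the \emph{intrinsic} $G$-action on $P^\vee\otimes_R S$. Once that is done, the normal basis theorem supplies the required cohomological vanishing, and the descent conclusion follows automatically.
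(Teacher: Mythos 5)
First, a caveat: the paper does not actually prove Theorem \ref{AKD1} --- it is quoted from Dutta \cite{Du}, whose published argument runs through a different circle of ideas (triviality of separable $\A^1$-forms over each residue field $k(P)$, hence an $\A^1$-fibration/locally polynomial structure on $A$, and then the Bass--Connell--Wright identification of locally polynomial algebras with symmetric algebras of rank one projectives). Your proof is therefore a genuinely different route: a direct Galois-descent computation in which the obstruction to rigidifying $\Phi$ is split into a semilinear descent datum $\alpha$ for $Q$ and a translation cocycle $\widetilde\beta\in Z^1(G,\operatorname{Hom}_S(Q,S))$ killed via the normal basis theorem. In the situation this note actually uses --- $L/k$ algebraic separable and $R$ a domain --- your argument is essentially complete: the bookkeeping $\alpha_{\sigma\tau}=\alpha_\sigma\alpha_\tau$, $\widetilde\beta_{\sigma\tau}=\sigma\cdot\widetilde\beta_\tau+\widetilde\beta_\sigma$, the effect of conjugating by a translation, and the Shapiro-type vanishing from $S\cong R[G]$ all check out. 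Your approach buys a self-contained structural proof with explicit equivariance; Dutta's buys uniformity over arbitrary base rings and arbitrary separable extensions.

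That said, two genuine gaps remain relative to the statement as given. (i) The ``classical fact'' that $S_{\mathfrak{p}}[x]$ admits only affine $S_{\mathfrak{p}}$-automorphisms is false when $S_{\mathfrak{p}}$ is non-reduced: $x\mapsto x+\epsilon x^{2}$ with $\epsilon^{2}=0$ is an automorphism. Since $R$ is an arbitrary $k$-algebra and $S=R\otimes_{k}L$ inherits any nilpotents of $R$, the decomposition $\phi_\sigma|_Q=\beta_\sigma+\alpha_\sigma$ on which your whole cocycle computation rests is not available in general; you must either add a reducedness hypothesis (harmless for every application in this paper, and then the affineness does follow by passing to the quotients by minimal primes) or enlarge the computation to the full filtered automorphism group, whose higher-degree part is controlled by nilpotents. (ii) The parenthetical ``preliminary limit argument'' for non-algebraic separable $L$ does not do what you need: the finitely generated subextension $L_0$ to which the data descends may be transcendental, it has no Galois closure, and descent along $k\subset k(t)$ is not Galois descent --- a separate specialization or spreading-out argument is required there. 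Neither gap affects the way Theorem \ref{AKD1} is invoked in Theorems \ref{main} and \ref{one-dim}, where $L$ is finite Galois over a field of characteristic zero and $R$ is a domain.
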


\begin{thm}\label{AKD2}
Let $k$ be a field of characteristic zero, $R$ a PID containing $k$ and $A$ an $R$-algebra such that 
$A$ is an $\A^{2}$-form over $R$ with respect to $k$. Then $A=R^{[2]}$. 
\end{thm}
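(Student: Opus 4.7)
The plan is to first show that the hypothesis forces $A$ to be an $\A^{2}$-fibration over $R$, and then to invoke Sathaye's theorem on the triviality of $\A^{2}$-fibrations over a DVR of characteristic zero, together with a patching argument over the PID $R$, to conclude $A = R^{[2]}$.

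For the fibration step, set $S := R\otimes_{k}\bar{k}$. Since $\bar{k}$ is faithfully flat over $k$, $S$ is faithfully flat over $R$; and $S$ is integral over $R$ because $\bar{k}$ is algebraic over $k$. The hypothesis reads $A\otimes_{R}S = S^{[2]}$, from which faithfully flat descent yields that $A$ is finitely generated and flat over $R$. For the fibre condition, fix $P\in\Sp R$. Going-up produces a prime $Q$ of $S$ with $Q\cap R = P$, and $L := \operatorname{Frac}(S/Q)$ is a field extension of $k(P)$ that is algebraic (since $S/Q$ is integral over $R/P$) and separable (since $\operatorname{char} k = 0$). Reducing $A\otimes_{R}S = S^{[2]}$ modulo $Q$ and then inverting the nonzero elements of $R/P$ gives $(A\otimes_{R}k(P))\otimes_{k(P)}L = L^{[2]}$, and Kambayashi's Theorem \ref{K} applied to the separable algebraic extension $k(P)\hookrightarrow L$ delivers $A\otimes_{R}k(P) = k(P)^{[2]}$. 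Thus $A$ is an $\A^{2}$-fibration over $R$.

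For the globalisation step, let $K = \operatorname{Frac} R$. For every nonzero prime $\p$ of $R$, the localization $R_{\p}$ is a DVR containing $\bQ$, so Sathaye's theorem gives $A\otimes_{R}R_{\p} = R_{\p}^{[2]}$; the $P=(0)$ case of the previous paragraph gives $A\otimes_{R}K = K^{[2]}$. The main obstacle is to pass from this pointwise local triviality to global triviality over $R$. The strategy is to lift a pair of generic coordinates of $A\otimes_{R}K$ to $A$ by clearing denominators and then correcting at each of the (at most finitely many) maximal ideals of $R$ where the naive choice fails, using the local coordinate pairs furnished by Sathaye. Freeness of projective modules over the PID $R$ ensures that the resulting pair of residual coordinates on $A$ generates a free $R$-subalgebra which, by checking at every prime, must coincide with $A$. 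This gluing step is the technically most delicate part of the plan.
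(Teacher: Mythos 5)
Your first step is sound and is essentially the paper's Lemma \ref{fibration}: base change to $S=R\otimes_k\bar k$, descend finite generation and flatness, and use Kambayashi's Theorem \ref{K} over each residue field (the paper routes this through a finite separable extension $L$ of $k$ and its Lemma \ref{over field}, but your going-up argument reaches the same conclusion). Note, incidentally, that the paper does not prove Theorem \ref{AKD2} at all --- it quotes it from Dutta \cite[Remark 8]{Du} --- but its proof of the more general Theorem \ref{one-dim} shows what a complete argument looks like.

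The genuine gap is your second step. Sathaye does give $A\otimes_R R_{\p}=R_{\p}^{[2]}$ at every maximal ideal, and generically $A\otimes_R K=K^{[2]}$, but the passage from this pointwise local triviality to $A=R^{[2]}$ is precisely the hard content of the theorem, and your ``clear denominators and correct at finitely many primes'' sketch does not supply it. The obstruction is that the transition data comparing two local trivializations of an $\A^2$-fibration live in the full automorphism group of the affine plane, not in $GL_2$; there is no elementary local--global principle (no analogue of the patching that works for projective modules or for graded symmetric algebras à la Bass--Connell--Wright) that lets you splice a generic coordinate pair with Sathaye's local pairs. Even granting a pair $(F,G)$ that is residually a coordinate system at every prime, concluding $A=R[F,G]$ requires a residual-coordinate theorem such as Theorem \ref{res-stable} --- whose hypothesis is that $A$ is \emph{already} $R^{[2]}$, so invoking it here is circular. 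The known way around this is structural, not patching: by Asanuma--Bhatwadekar (Theorem \ref{SMB&A}) one finds a single global $H\in A$ making $A$ an $\A^1$-fibration over $R[H]$, reduces via Lemma \ref{residual} and Theorem \ref{AKD1} to a rank-one projective module over $R[H]$, and descends it to $R$ by Itoh's lemma; over the PID $R$ that module is free, giving $A=R^{[2]}$. As written, your proposal defers the entire difficulty to a ``technically delicate'' gluing step that is not actually carried out and, in this naive form, cannot be.
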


\noindent

Next we quote a result on $\A^2$-fibrations due to T. Asanuma and S.M. Bhatwadekar (\cite[Theorem 3.8 and Remark 3.13]{AB}).

\begin{thm}\label{SMB&A}
Let $R$ be a one-dimensional Noetherian $\bQ$-algebra. 
Let $A$ be an $\A^2$-fibration over $R$. 
Then there exists $H\in{A}$ such that $A$ is an $\A^1$-fibration over $R[H]$. 
\end{thm}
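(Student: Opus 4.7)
The plan is to produce an element $H \in A$ whose image is a coordinate in each fiber $A \otimes_R k(P) = k(P)^{[2]}$; once such $H$ is in hand, the assertion that $A$ is an $\A^1$-fibration over $R[H]$ is essentially formal. Finite generation of $A$ over $R[H]$ is inherited from finite generation over $R$; flatness of $A$ over $R[H]$ follows from a fiber-by-fiber flatness criterion combined with flatness of $A$ over $R$, since each fiber $A/PA = k(P)^{[2]}$ is free over $k(P)[\bar H] = k(P)^{[1]}$; and for a prime $Q$ of $R[H]$ contracting to $P$ in $R$, the fiber $A \otimes_{R[H]} k(Q)$ is a quotient-and-localization of $k(P)[\bar H, T] = k(P)^{[2]}$ that yields $k(Q)^{[1]}$.

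To construct $H$, I would first work generically. Let $K = \mathrm{Frac}(R)$; the generic fiber $A_K = A \otimes_R K$ equals $K^{[2]}$, so choose a variable $H_K$ of this polynomial ring and clear denominators to obtain $H_0 \in A$ with $K[H_0] = K[H_K]$. A standard spreading-out argument, using Noetherianity of $R$ and finite generation of $A$, shows that $H_0$ remains a residual coordinate at $P$ for all but finitely many primes of $R$. Since $\dim R = 1$, these exceptional primes form a finite set of maximal ideals $\m_1, \dots, \m_r$.

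Next, I would repair $H_0$ at each $\m_i$. The localization $B_i := A \otimes_R R_{\m_i}$ is an $\A^2$-fibration over the one-dimensional Noetherian local ring $R_{\m_i}$ containing $\bQ$; using known structural results in this setting — notably Sathaye-type triviality theorems for $\A^2$-fibrations over DVRs of residue characteristic zero, applied after passing to the normalization of $R_{\m_i}$ together with a faithfully flat descent — one produces an element $H_i \in B_i$ whose image modulo $\m_i$ is a coordinate in $k(\m_i)^{[2]}$. Clearing denominators brings each $H_i$ into $A$, and then the Chinese Remainder Theorem applied to a sufficiently high power of $\m_1 \cdots \m_r$ patches the $H_i$ together with $H_0$ into a single global $H \in A$ that is a residual coordinate at every prime of $R$.

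The main obstacle is the local step at each $\m_i$: passing from the information that $B_i$ is an $\A^2$-fibration over $R_{\m_i}$ to the existence of a residual coordinate at $\m_i$ that lifts to a well-behaved element of $B_i$. This is where characteristic zero enters decisively, via Rentschler's theorem on locally nilpotent derivations of the affine plane and its relative versions, and it also explains the one-dimensional restriction on $R$: in higher dimension the exceptional locus of the generic coordinate $H_0$ need not be finite, and the Chinese Remainder patching collapses.
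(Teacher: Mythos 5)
This theorem is not proved in the paper at all: it is quoted verbatim as \cite[Theorem 3.8]{AB} (Asanuma--Bhatwadekar), so there is no internal proof to compare against. Judged on its own merits, your sketch reproduces the broad shape of the known argument (generic triviality, a finite exceptional locus because $\dim R=1$, local repair, patching), but the two steps you label as routine are precisely where the real content lies, and as written both have genuine gaps.

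First, the local step at a non-normal $\m_i$. You propose to pass to the normalization of $R_{\m_i}$ and come back by ``faithfully flat descent.'' But the normalization map $R_{\m_i}\to \widetilde{R_{\m_i}}$ is a birational integral extension, and such an extension is flat only when $R_{\m_i}$ is already normal; so there is no faithfully flat descent available exactly in the case you need it. (Moreover $\widetilde{R_{\m_i}}$ need not be local, nor a finite $R_{\m_i}$-module.) The normal case is indeed handled by Sathaye's theorem over a DVR, but descending the existence of a residual coordinate across the normalization is the technical heart of \cite{AB}, carried out there via the conductor and a patching argument over the seminormalization --- not by flat descent. Second, the Chinese Remainder step: imposing $H\equiv H_i \pmod{\m_i^N A}$ fixes the fibers at $\m_1,\dots,\m_r$, but it gives no control over the infinitely many primes where $H_0$ was already good; the corrected $H$ is still generically a coordinate, so its bad locus is again finite, but it may be a \emph{different} finite set, and your argument as stated does not terminate. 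One needs to choose the correction inside a specific ideal (or otherwise exhibit an open condition cutting out the good locus) so that no new exceptional primes are created. Until these two points are supplied, the proposal is a plausible outline of the Asanuma--Bhatwadekar strategy rather than a proof.
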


\noindent

The following result on residual coordinates was proved by Bhatwadekar and Dutta 
for  Noetherian rings containing $\bQ$ 
(\cite[Theorem 3.2]{BD}) 
and later generalised by A. van den Essen and P. van Rossum for general $\bQ$-algebras (\cite[Theorem 3.4]{RE}).

\begin{thm}\label{res-stable}
Let $R$ be a $\bQ$-algebra, $A= R^{[2]}$ and $F\in{A}$. 
If $F$ is a residual coordinate in $A$ then $A=R[F]^{[1]}$.
\end{thm}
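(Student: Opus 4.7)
The plan is to exhibit $A$ as an $\A^1$-fibration over $B := R[F]$, and then to extract a global trivialization from the fiberwise coordinate data; this second step is where the real content lies.

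I would begin by verifying that $B = R[F]$ is isomorphic to $R^{[1]}$, i.e., that $F$ is algebraically independent over $R$. Since $F$ is a residual coordinate, its image $\bar F$ in $A \otimes_R k(P) = k(P)^{[2]}$ is a variable and in particular transcendental over $k(P)$ for every prime $P$ of $R$. A hypothetical nontrivial relation $\sum a_i F^i = 0$ with $a_i \in R$ would force every coefficient $a_i$ to lie in every prime of $R$, hence in the nilradical; a further reduction (either by passing to $R/\sqrt{0}$ and lifting back, or by exploiting the freeness of $A$ as an $R$-module) eliminates the nilpotent case and yields $B = R^{[1]}$.

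Next I would show that $A$ is an $\A^1$-fibration over $B$. Finite generation of $A$ over $B$ is immediate from $A = R^{[2]}$. For any prime $Q$ of $B$ with $P := Q \cap R$, the fiber is
\[
A \otimes_B k(Q) \;\cong\; (A \otimes_R k(P)) \otimes_{k(P)[\bar F]} k(Q) \;\cong\; k(Q)^{[1]},
\]
using that $\bar F$ is a variable in $A \otimes_R k(P) = k(P)^{[2]}$. For flatness of $A$ over $B$ I would appeal to miracle flatness: $B$ is a Noetherian Cohen-Macaulay ring, $A$ is Cohen-Macaulay over $B$, and all fibers have the constant dimension one.

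The remaining step, which I expect to be the main obstacle, is upgrading this $\A^1$-fibration structure to a trivial polynomial extension $A = B^{[1]}$. My approach would be to reduce to the local case by localizing at each maximal ideal $\m$ of $R$, where one has a residual coordinate partner $\bar G$ of $\bar F$ inside $A \otimes_R k(\m)$. Lifting $\bar G$ to some $G \in A$ via Nakayama, and using the $\A^1$-fibration structure together with constancy of fiber dimension to control the cokernel of $R_\m[F, G] \hookrightarrow A \otimes_R R_\m$, would give local triviality. The final and hardest ingredient is to globalize this local $G$ to a single element of $A$ witnessing $A = B^{[1]}$; in characteristic zero this can be achieved by a Quillen-type patching argument adapted to $\A^1$-fibrations over Noetherian $\bQ$-algebras, and this patching is the essential technical heart of the Bhatwadekar--Dutta theorem.
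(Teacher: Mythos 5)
The paper does not actually prove this statement: Theorem \ref{res-stable} is quoted from Bhatwadekar--Dutta \cite[Theorem 3.2]{BD} and used as a black box (in Lemma \ref{residual}), so there is no internal proof to compare yours against. Judged on its own terms, your proposal establishes the easy part correctly (algebraic independence of $F$ over $R$, and the computation of the fibres of $A$ over $B=R[F]$), but it has two genuine gaps. The smaller one is the appeal to miracle flatness: $R$ is an arbitrary Noetherian $\bQ$-algebra, so $B=R[F]$ need not be regular (nor even Cohen--Macaulay) and $A=R^{[2]}$ need not be Cohen--Macaulay, which are exactly the hypotheses that criterion requires. Flatness of $A$ over $B$ does hold, but the correct route is the fibrewise flatness criterion: $A$ and $B$ are flat over $R$, and $A\otimes_R k(P)=k(P)[\bar F]^{[1]}$ is free over $B\otimes_R k(P)=k(P)[\bar F]$ for every prime $P$.

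The serious gap is the last step. Passing from ``$A$ is an $\A^1$-fibration over $B$'' to ``$A=B^{[1]}$'' is the entire content of the theorem, and the mechanism you propose does not work. Lifting a residual partner $\bar G$ of $\bar F$ to some $G\in A\otimes_R R_{\m}$ and invoking Nakayama cannot show $R_{\m}[F,G]=A\otimes_R R_{\m}$, because $A\otimes_R R_{\m}$ is not a finitely generated \emph{module} over $R_{\m}[F,G]$; an inclusion of rings which becomes an equality modulo $\m$ need not be an equality, and this is precisely where the difficulty sits. Note also that an $\A^1$-fibration over a general (non-normal, possibly non-reduced) Noetherian base need not even be locally trivial, so the global hypothesis $A=R^{[2]}$ must be used in an essential way after the fibration is constructed --- your argument never uses it again. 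The actual proof in \cite{BD} proceeds by induction on $\dim R$ and rests on heavy inputs: Sathaye's theorem on polynomial rings in two variables over a discrete valuation ring containing $\bQ$ (this is where characteristic zero enters) and Asanuma's structure theory of affine fibrations. A Bass--Connell--Wright/Quillen-type patching step does appear, but only after local triviality has been established by these means, and one must still argue that the resulting rank-one projective $B$-module is free in order to conclude $A=R[F]^{[1]}$ rather than merely $A\cong Sym_{R[F]}(Q)$. As written, your proposal defers all of this to a single sentence.
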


\noindent

Next we state a theorem which follows from a fundamental result in the theory of locally nilpotent derivations (\cite[Corollary 1.26]{ST}).
\begin{thm}\label{slice}
Let $k$ be a field of characteristic zero, $A$ a $k$-algebra, $D$ a locally nilpotent derivation on $A$ and $B:={\rm ker~} D$. Then the following are equivalent:
\begin{enumerate}
\item[\rm (1)] $D$ admits a slice $s$.
\item[\rm (2)] $A=B[s]=B^{[1]}$ and $D=\dfrac{d}{ds}$ on $A$.
\item[\rm (3)] $D(A)=A$.
\end{enumerate} 
\end{thm}

\noindent

The following rigidity theorem is due to D. Daigle (\cite[Theorem $2.5$]{Da}). 

\begin{thm}\label{Darigidity}
Let $k$ be a field of characteristic zero and $D$ be a locally nilpotent derivation on the polynomial ring $A=k^{[3]}$ with 
${\rm rk~}(D)=2$.
Let $X,W\in{{\rm ker~} D}$ be such that $A=k[X]^{[2]}= k[W]^{[2]}$. Then $k[X]=k[W]$. 
\end{thm}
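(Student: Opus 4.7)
The plan is to attach to $D$ an intrinsic invariant on $B := Ker(D)$ that pins down the subalgebra $k[X]$; the right choice is the plinth ideal $\pi(D) := D(A) \cap B$, an ideal of $B$ that depends only on $D$. First I would collect the structural preliminaries. By Miyanishi's theorem on locally nilpotent derivations of $k^{[3]}$, the ring $B$ is $k$-isomorphic to $k^{[2]}$. Since $D$ is a $k[X]$-linear LND on $A = k[X][Y_1, Y_2]$ over the PID $k[X]$, the standard structure theorem for $R$-LNDs on $R^{[2]}$ yields $B = k[X][h_X]$ for some $h_X \in A$; symmetrically $B = k[W][h_W]$. Thus $(X, h_X)$ and $(W, h_W)$ are two coordinate systems of $B \cong k^{[2]}$, and the problem reduces to showing $k[X] = k[W]$ as subrings of $B$.

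The technical core is the claim that $\pi(D) = g(X) \cdot B$ for some nonconstant $g(X) \in k[X]$. To establish this I would first localize at $k[X] \setminus \{0\}$: by Rentschler's theorem the extended derivation $D \otimes_{k[X]} k(X)$ on $k(X)[Y_1, Y_2]$ is conjugate to a partial derivative and hence surjective, so $\pi(D) \otimes_{k[X]} k(X) = B \otimes_{k[X]} k(X)$. This produces a nonzero element of $k[X] \cap \pi(D)$, and I take $g(X)$ to be a generator of the ideal $k[X] \cap \pi(D)$ inside the PID $k[X]$. A descent argument using the freeness of $A$ as a $k[X]$-module then upgrades this to $\pi(D) = g(X) \cdot B$. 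Non-constancy of $g(X)$ follows from the rank-$2$ hypothesis: a constant generator would give $1 \in \pi(D)$, providing a slice for $D$ and forcing $rk(D) \leq 1$. By symmetry, $\pi(D) = g'(W) \cdot B$ for some nonconstant $g'(W) \in k[W]$.

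Since the units of $B \cong k^{[2]}$ are $k^\times$, the two principal generators differ by a unit: $g(X) = \lambda \cdot g'(W)$ for some $\lambda \in k^\times$, and in particular $g(X) \in k[W]$. Writing $X = P(W, h_W) \in k[W][h_W]$ with $\deg_{h_W} P = m$, the $h_W$-degree of $g(X) = g(P(W, h_W))$ equals $m \cdot \deg(g)$; if $m \geq 1$ this contradicts $g(X) \in k[W]$, forcing $m = 0$ and $X \in k[W]$. Symmetrically $W \in k[X]$, and composing the resulting relations $X = P_0(W)$, $W = Q_0(X)$ forces $\deg P_0 = \deg Q_0 = 1$, so $k[X] = k[W]$. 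The main obstacle I expect is the principal-generator claim for $\pi(D)$: while the Rentschler normal form over $k(X)$ readily produces some element of $k[X] \cap \pi(D)$, showing that such an element generates the full plinth ideal requires a careful descent that genuinely deploys the rank-$2$ hypothesis to rule out extra generators with nontrivial $h_X$-component.
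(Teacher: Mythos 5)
The paper offers no proof of this statement --- it is quoted verbatim from Daigle \cite[Theorem $2.5$]{Da} --- so I can only assess your argument on its own terms. Its central claim, that the plinth ideal $\pi(D)=D(A)\cap B$ equals $g(X)\cdot B$ for some nonconstant $g(X)\in k[X]$, is false, and the step you yourself flag as the ``main obstacle'' is in fact fatal. Take $A=k[x,y,z]$, $\delta=x\partial_y+y\partial_z$ and $D=(2xz-y^2)\,\delta$. Since $2xz-y^2\in \ker\delta$, $D$ is again locally nilpotent with $B=\ker D=\ker\delta=k[x,\,2xz-y^2]$; since the rank as defined in the paper depends only on the kernel, $rk(D)=rk(\delta)=2$, and $x\in B$ with $A=k[x]^{[2]}$. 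But $D(A)\subseteq(2xz-y^2)A$, so $\pi(D)\subseteq(2xz-y^2)A\cap B=(2xz-y^2)B$, whence $k[x]\cap\pi(D)=0$: there is no nonzero element of $k[x]$ in the plinth ideal at all, let alone a generator of it. (The plinth ideal of a locally nilpotent derivation of $k^{[3]}$ is indeed principal, but that is a separate, later theorem of Daigle and Freudenburg, and its generator need not lie in $k[X]$.)

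The error enters at the localization step. Rentschler's theorem puts $D\otimes_{k[X]}k(X)$ in the form $f(u)\partial_v$ for some coordinate system $(u,v)$ of $k(X)[Y_1,Y_2]$, not in the form $\partial_v$; such a derivation is surjective only when $f$ is constant, i.e., only when it admits a slice, which need not happen. In the example above the localized derivation is $(2xz-y^2)\partial_s$ with $s=y/x$, so its plinth ideal is the proper ideal generated by $2xz-y^2$ and never meets $k(x)\setminus\{0\}$. Consequently the whole subsequent chain --- $g(X)=\lambda g'(W)$, hence $g(X)\in k[W]$, hence the $h_W$-degree argument --- has no starting point. The later steps (the leading-term degree count and the final composition $X=P_0(Q_0(X))$ forcing degree one) are fine as pure algebra, but the invariant you attach to $D$ simply does not detect the subring $k[X]$, so the proof does not go through; one needs a genuinely different intrinsic characterization of $k[X]$ inside $B$, as in Daigle's original argument.
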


\noindent

The following result on fixed point free locally nilpotent derivations was obtained by
Bhatwadekar and Dutta (\cite[Theorem 4.7]{BD97}) for any Noetherian $\bQ$-algebra 
and later generalized to any $\bQ$-algebra by J. Berson, A. van den Essen and S. Maubach
(\cite[Theorem 3.5]{BEM}; \cite[Theorem 4.15]{ST}).

\begin{thm}\label{BD}
Let $R$ be a $\bQ$-algebra, $A=R[X,Y]=R^{[2]}$, $D$ a fixed point free locally nilpotent $R$-derivation of $A$
and $B={\rm ker}~D$. Then $D$ admits a slice, $B=R^{[1]}$ and $A=B^{[1]}$.
\end{thm}

 \begin{rem}
{\em A fixed point free locally nilpotent derivation on $k[X,Y,Z]$ has a slice (\cite{Kal}). 
But a fixed point free locally nilpotent $R$-derivation on $R[X,Y,Z]$ need not have a slice even if $R$ is a PID (\cite[Example 5.6]{BNL}).}
%In fact there are locally nilpotent derivations $D$ of $R[X,Y,Z]$ with 
%${\rm rk~}(D)=3$  
\end{rem}

%\noindent
%The following result was proved by Kahoui and Ouali in \cite[Corollary 3.2]{KO}.
%\begin{thm}\label{kahoui}
%Let $R$ be a Noetherian ring containing $\bQ$ and $A$ be a locally stably trivial $\A^{2}$-fibration over $R$ having a fixed point free locally nilpotent $R$-derivation. Then $A \cong (Sym_{R}(Q))^{[1]}$, where $Q$ is a rank one projective $R$-module. 
%\end{thm}
 
\section{Main results}

In this section we will prove our main results.
Note that if $k$ is a field of characteristic zero,  $A$ a $k$-algebra and $L$ a field extension of $k$, then any $k$-linear locally 
nilpotent derivation $D$ on $A$ can be extended to a locally nilpotent derivation $D \otimes 1_L$ on $A\otimes_k L$
such that $(D \otimes 1_L)(a\otimes\lambda)=D(a)\otimes\lambda$ for all $a\in{A}$ and $\lambda\in{L}$.
We will first establish our main theorem on $\A^{3}$-forms over $k$ (Theorem \ref{main}).
We begin with a special case of this result which holds for $\A^{3}$-forms over a PID $R$ with respect to $k$.     

\begin{prop}\label{prop}
Let $k$ be a field of characteristic zero with algebraic closure $\bar{k}$, 
$R$ a PID containing $k$ and $A$ be an $\A^3$-form over $R$ with respect to $k$. Suppose that there exists an $R$-linear locally nilpotent derivation $D$ on $A$ 
such that ${\rm rk~}(D\otimes 1_{\bar{k}})=1$. Then $A=R^{[3]}$.  
\end{prop}

\begin{proof}
Since $A$ is an $\A^3$-form over $R$ with respect to $k$, there exists a finite extension $L$ over $k$  
such that $A\otimes_k L= (R\otimes_k L)^{[3]}$ and ${\rm rk~}(D\otimes_k{1_L})=1$.
Let $B={\rm ker~} D$. Set $\bar{R}:=R\otimes_{k}L$, $\bar{A}:=A\otimes_{k}L$, $\bar{B}: =B\otimes_{k}L$ and $\bar{D}:= D\otimes 1_L$. 
Then $\bar{A}=\bar{R}^{[3]}$ and ${\rm ker~} \bar{D}=\bar{B}$.
\smallskip
%\noindent
Since ${\rm rk}(\bar{D})=1$, we have $\bar{A}=\bar{B}^{[1]}$ and $\bar{B}=\bar{R}^{[2]}$. Hence, $B=R^{[2]}$ by Theorem \ref{AKD2}. 
As ${\rm Pic} (B)$ is trivial,  $A=B^{[1]}$  by Theorem \ref{AKD1}. Thus,  $A=R^{[3]}$.
\end{proof}

We now prove our main result on $\A^3$-forms.

\begin{thm}\label{main}
Let $k$ be a field of characteristic zero with algebraic closure $\bar{k}$ and $A$ be an $\A^{3}$-form over $k$. 
Suppose that there exists a $k$-linear locally nilpotent derivation $D$ on $A$ 
such that ${\rm rk~}(D\otimes 1_{\bar{k}})\leq{2}$. Then $A=k^{[3]}$.
\end{thm}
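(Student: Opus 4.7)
The plan is to produce an element $X_0 \in A$ whose image in $B := A \otimes_k \bar{k} = \bar{k}^{[3]}$ is a coordinate, i.e., $B = \bar{k}[X_0]^{[2]}$. Given such an $X_0$, the subring $R := k[X_0] \subseteq A$ is a PID isomorphic to $k^{[1]}$, and $A$ becomes an $\A^2$-form over $R$ with respect to $k$, since $A \otimes_k \bar{k} = \bar{k}[X_0]^{[2]} = (R \otimes_k \bar{k})^{[2]}$. Dutta's Theorem \ref{AKD2} then delivers $A = R^{[2]} = k^{[3]}$, finishing the proof.

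The construction of $X_0$ splits according to $rk(\tilde D)$, where $\tilde D := D \otimes 1_{\bar{k}}$. If $rk(\tilde D) \leq 1$, pick $X_1, X_2 \in \ker \tilde D$ with $B = \bar{k}[X_1, X_2][Y]$; a direct analysis of $\bar{k}[X_1, X_2]$-linear locally nilpotent derivations of $\bar{k}[X_1, X_2][Y]$ shows that $\tilde D = h\,\partial_Y$ for some $h \in \bar{k}[X_1, X_2]$, whence $\ker \tilde D = \bar{k}[X_1, X_2] \cong \bar{k}^{[2]}$. By flatness of $\bar{k}/k$ the subalgebra $C := \ker D \subseteq A$ is then a separable $\A^2$-form over $k$, and Kambayashi's Theorem \ref{K} gives $C = k[F_1, F_2] = k^{[2]}$. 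Since $\bar{k}[F_1, F_2] = \ker \tilde D$ while $B = \ker(\tilde D)[Y]$, we have $B = \bar{k}[F_1]^{[2]}$, and $X_0 := F_1$ works. If $rk(\tilde D) = 2$, pick $X \in \ker \tilde D$ with $B = \bar{k}[X]^{[2]}$; Daigle's rigidity Theorem \ref{Darigidity} forces the subring $\bar{k}[X] \subseteq B$ to be independent of such $X$. Since $\tilde D$ commutes with the action of $G := \operatorname{Gal}(\bar{k}/k)$ on $B$, any $\sigma \in G$ sends $X$ to another admissible coordinate in $\ker \tilde D$, so $\sigma(\bar{k}[X]) = \bar{k}[\sigma X] = \bar{k}[X]$. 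Galois descent then writes $\bar{k}[X] = E \otimes_k \bar{k}$ for $E := (\bar{k}[X])^G \subseteq A$, and $E$ is a separable $\A^1$-form over $k$; such forms are trivial, so $E = k[X_0]$ for some $X_0 \in A$ with $\bar{k}[X_0] = \bar{k}[X]$, giving $B = \bar{k}[X_0]^{[2]}$.

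The crux of the argument is the rank-$2$ case: extracting a Galois-invariant coordinate of $B$ from the mere existence of some coordinate lying in $\ker \tilde D$. Daigle's rigidity is indispensable here, for it pins down $\bar{k}[X]$ as a distinguished (hence Galois-stable) subring of $B$; combined with the triviality of separable $\A^1$-forms over a field, this yields the desired descent. In the lower-rank case Kambayashi's theorem applied to $C$ plays the analogous descent role and directly supplies the invariant variable.
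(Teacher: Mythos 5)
Your proof is correct and follows essentially the same route as the paper: a case split on $rk(D\otimes 1_{\bar{k}})$, with Kambayashi's theorem handling the kernel in the low-rank case and Daigle's rigidity forcing Galois-stability of $\bar{k}[X]$ in the rank-two case, followed by Dutta's theorems to descend to $k$. The only cosmetic differences are that the paper extracts the invariant generator by an explicit trace average $W=\sum_{\sigma\in G}\sigma(X)$ over a finite Galois splitting field rather than by abstract Galois descent plus triviality of separable $\A^1$-forms, and in the rank-one case it applies Theorem \ref{AKD1} directly to $\ker D=k^{[2]}$ instead of reducing to Theorem \ref{AKD2} over $k[F_1]$.
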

\begin{proof} 
Since $A$ is an $\A^3$-form over $k$, there exists a finite Galois extension $L$ over $k$ with Galois group $G$ 
such that $A\otimes_k{L}=L^{[3]}$ and ${\rm rk~}(D\otimes_k{1_L})\leq{2}$.
Let $B={\rm ker~} D$. Set $\bar{A}:=A\otimes_{k}L$,  $\bar{B}: =B\otimes_{k}L$ and $\bar{D}:= D\otimes 1_L$. Then $\bar{A}=L^{[3]}$ and 
${\rm ker~} \bar{D}=\bar{B}$.

\smallskip
%\noindent
If ${\rm rk}(\bar{D})=1$, then $A=k^{[3]}$ by Proposition \ref{prop} (taking $R=k$).
%Then $\bar{A}=\bar{B}^{[1]}$ and $\bar{B}=L^{[2]}$. Hence, $B=k^{[2]}$ by Theorem \ref{K}. 
%As ${\rm Pic} (B)$ is trivial,  $A=B^{[1]}$  by Theorem \ref{AKD1}. Thus,  

\smallskip
%\noindent
We now consider the case ${\rm rk~}(\bar{D})=2$. 
We then have  $X\in{\bar{B}}$ such that $\bar{A}=L[X]^{[2]}$. We show that there exists $W\in{L[X]\cap{A}}$ such that $L[X]=L[W]$.

\smallskip
%\noindent
We identify $A$ with its image in $\bar{A}$ under the map $a\rightarrow{a\otimes1}$. 
Any $\sigma\in{G}$ can be extended to an $A$-automorphism of $\bar{A}$ by defining 
$\sigma(a\otimes l)=a\otimes \sigma{(l)}$, for all $a\in{A}$ and $l\in{L}$. Let 
\begin{equation*}
X=1\otimes l_0+e_1\otimes l_1+\dots+e_r\otimes l_r
\end{equation*}
where $1,e_1,\dots,e_r$ form a part of a $k$-basis of $A$ and $l_i$'s are in $L$. Since the bilinear map $L\times{L}\longrightarrow{k}$ given by $(x,y)\mapsto{Tr(xy)}$ is non-degenerate (where $Tr(a):=Trace(a)$ for all $a$ in $L$), replacing $X$ by $lX$ (for some $l\in{L})$ if necessary we can assume that $Tr(l_i)\neq{0}$ for some $i\geq{1}$. Thus
\begin{equation*}
W:=\underset{\sigma\in{G}}{\sum}\sigma(X)=1\otimes{Tr(l_0)}+e_1\otimes{Tr(l_1)}+\dots+e_r\otimes{Tr(l_r)}
\end{equation*} 
is an element of $A\setminus k$. Note that $\sigma\bar{D}=\bar{D}\sigma$ and hence $\sigma{(X)}\in{\bar{B}}$. Since $\sigma$ is an automorphism of $\bar{A}$, by Theorem \ref{Darigidity}, $L[X]=L[\sigma{(X)}]$. Hence $\sigma(X)$ is linear in $X$ for each $\sigma$ and hence $deg_{X}W\leq{1}$. 
But as $B\cap{L}=k$, $W\notin{L}$, so that $deg_{X}{W}=1$ which implies $L[X]=L[W]$. 

So $\bar{A}=L[W]^{[2]}=(k[W]\otimes_{k}L)^{[2]}$. By Theorem \ref{AKD2}, we get $A=k[W]^{[2]}$.   
\end{proof}

\begin{rem}\label{partial} 
{\em Let $k$ be a field of characteristic zero with algebraic closure $\bar{k}$ and $A$ an $\A^3$-form over $k$.
We record below a few other results on the triviality of $A$.

(1) D. Daigle and S. Kaliman have proved (\cite[Corollary 3.3]{DK}) that if $A$ admits 
a fixed point free locally nilpotent derivation $D$, then $A=k^{[3]}$.

(2) Daigle and Kaliman have also proved (\cite[Proposition 4.9]{DK}) that if $A$ contains an element $f$
which is a coordinate of $A \otimes_k \bar{k}$, then $A=k^{[3]}$ and $f$  is a coordinate of $A$.
 
(3) M. Koras and P. Russell have proved (\cite[Theorem C]{RK}) that if $A$  admits 
an effective action of a reductive algebraic $k$-group of positive dimension, then $A=k^{[3]}$. 

(4) Recently, R.V. Gurjar, K. Masuda and M. Miyanishi have shown (\cite{GMM}) that $A=k^{[3]}$ if $A$ admits
either a fixed point free locally nilpotent derivation or a non-confluent action of a unipotent group of dimension two.
Their results give an alternative approach to Theorem \ref{main} for the case ${\rm rk~}(\bar{D})=1$.
%\item[\rm (a)] After receiving a preprint of our paper, Prof. M. Miyanishi informed us that a statement similar to Theorem \ref{main}, which %says that an $\A^3$-form over a field of characteristic zero having a fixed point free $G_a$ action or a non-confluent action of a unipotent %group of dimension $2$ is trivial, has also  been obtained recently in (\cite{GMM}) by R.V. Gurjar, M. Miyanishi and K. Masuda by a different %approach.
%\item[\rm (b)] Another Result similar to Theorem \ref{main} had also been obtained in \cite{DK}, which says that for a $k$-algebra $A$, % $f\in{A}$ is a coordinate in $A\otimes_k{\bar{k}}=\bar{k}^{[3]}$ implies $f$ is a coordinate in $A$ and $A=k^{[3]}$.
%end{enumerate}
}
\end{rem}

We now extend Theorems \ref{K} and \ref{AKD2} to more general rings.
For convenience, we first record a few easy lemmas.

\begin{lem}\label{residual}
Let $R$ be a ring containing $\bQ$ and $A=R^{[2]}$. If $H\in A$ 
is such that $A$ is an $\A^{1}$-fibration over $R[H]$, then $A=R[H]^{[1]}$.
\end{lem}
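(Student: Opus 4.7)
My plan is to verify that $H$ is a residual coordinate in $A$, after which Theorem \ref{res-stable} immediately yields $A = R[H]^{[1]}$. So for each prime $P$ of $R$ with residue field $\kappa := k(P)$ and image $\bar H$ of $H$ in $A\otimes_R \kappa = \kappa^{[2]}$, I must show $A\otimes_R \kappa = \kappa[\bar H]^{[1]}$.

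The first step is to observe that the $\A^1$-fibration property is preserved under base change, so $\kappa^{[2]} = A \otimes_R \kappa$ is an $\A^1$-fibration over $R[H] \otimes_R \kappa$. Since $\A^1$-fibrations have relative dimension one, a dimension count forces $\dim(R[H]\otimes_R \kappa) = 1$. As this $\kappa$-algebra is generated by the single element $\bar H$, it is a quotient of the PID $\kappa[T]$; the only one-dimensional such quotient is $\kappa[T]$ itself. Thus $R[H]\otimes_R \kappa = \kappa[\bar H] \cong \kappa^{[1]}$, and in particular $\bar H$ is transcendental over $\kappa$.

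At this stage $\kappa^{[2]}$ is an $\A^1$-fibration over the PID $\kappa[\bar H]$, which contains $\bQ$. Invoking Sathaye's theorem (or its Kambayashi--Wright / Bhatwadekar-style extensions) on the triviality of $\A^1$-fibrations over PIDs containing $\bQ$, I conclude $A\otimes_R \kappa = \kappa[\bar H]^{[1]}$. Hence $H$ is a residual coordinate, and Theorem \ref{res-stable} delivers $A = R[H]^{[1]}$. The hardest input is this Sathaye-style step; alternatively, since the fiber $\kappa[X,Y]/(\bar H)$ is forced to be $\kappa^{[1]}$ by the fibration property, one could apply Abhyankar--Moh--Suzuki (valid over the characteristic zero field $\kappa$) to conclude directly that $\bar H$ is a variable of $\kappa^{[2]}$, which is all that is needed.
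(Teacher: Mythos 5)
Your proof follows essentially the same route as the paper's: show that $H$ is a residual coordinate by checking fibrewise that $A\otimes_R k(P)=k(P)[\bar H]^{[1]}$, then invoke Theorem \ref{res-stable}. You in fact supply more justification than the paper does for the fibrewise step (the paper simply asserts that an $\A^1$-fibration over the PID $k(P)[\bar H]$ is trivial); your Abhyankar--Moh--Suzuki alternative is a clean way to close that step, and is a safer citation than ``Sathaye's theorem,'' which properly concerns $\A^2$-fibrations over a DVR.
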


\begin{proof}
Let $\p$ be a prime ideal of $R$ and let $\bar{H}$ denote the image of $H$ in $A\otimes_{R}{k(\p)}$.
Then $A\otimes_{R}{k(\p)}$ is an $\A^{1}$-fibration over the PID $k(\p)[\bar{H}]$
and hence  $A\otimes_{R}{k(\p)}=k(\p)[\bar{H}]^{[1]}$. 
Thus, $H$ is a residual coordinate of $A$. Hence, by Theorem \ref{res-stable}, $A=R[H]^{[1]}$. 
\end{proof}

We now observe that Theorem \ref{K} extends to 
separable $\A^2$-forms over a field $K$ with respect to a subfield $k$. 

\begin{lem}\label{over field}
Let $k$ be a field and $K$ a field extension of $k$. 
Let $A$ be a $K$-algebra such that $A\otimes_k{L}={(K\otimes_{k}L)}^{[2]}$, for some finite separable field extension $L$ of $k$. Then $A=K^{[2]}$. 
\end{lem}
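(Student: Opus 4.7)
The plan is to reduce this to Kambayashi's theorem (Theorem \ref{K}) by reinterpreting the hypothesis as a statement about base change from $K$ rather than from $k$. The crucial observation is the identification
\[
A \otimes_k L \;=\; A \otimes_K (K \otimes_k L),
\]
which turns $A\otimes_k L$ into a $(K\otimes_k L)$-algebra isomorphic to $(K\otimes_k L)^{[2]}$. So I need to analyze the ring $K \otimes_k L$ and split the isomorphism into pieces indexed by its factors.

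Since $L/k$ is finite separable, by the primitive element theorem $L = k(\alpha)$ with separable minimal polynomial $f \in k[x]$. Over $K$, this polynomial factors into distinct irreducibles $f = f_1 \cdots f_s$ (distinct because $f$ stays separable over $K$). Hence
\[
K \otimes_k L \;\cong\; K[x]/(f) \;\cong\; \prod_{i=1}^{s} K[x]/(f_i) \;=\; \prod_{i=1}^{s} K_i,
\]
with each $K_i$ a finite separable field extension of $K$. The primitive idempotents $e_1, \dots, e_s$ corresponding to this product decomposition split both sides of the hypothesis: on the right, $(K\otimes_k L)^{[2]} = \prod_i K_i^{[2]}$, and on the left, $A \otimes_K (K\otimes_k L) = \prod_i (A \otimes_K K_i)$. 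Matching the $i$-th factors (via multiplication by $e_i$), I obtain, for each $i$, an isomorphism of $K_i$-algebras
\[
A \otimes_K K_i \;\cong\; K_i^{[2]}.
\]

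Fixing any one index $i$, this says that $A$ is a $K$-algebra which becomes $K_i^{[2]}$ after base change along the finite separable extension $K_i/K$. Kambayashi's Theorem \ref{K} applies verbatim to yield $A = K^{[2]}$.

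The proof is essentially formal once one sees the product decomposition of $K \otimes_k L$; the only point requiring any care is verifying that tensor product genuinely distributes over the finite product $\prod K_i$ and that the isomorphism of $(K\otimes_k L)$-algebras transfers cleanly to each factor after cutting by the idempotents $e_i$. After that, no new ideas are needed beyond invoking Theorem \ref{K}.
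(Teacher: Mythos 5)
Your proposal is correct and follows essentially the same route as the paper: both rewrite $A\otimes_k L$ as $A\otimes_K(K\otimes_k L)$, decompose $K\otimes_k L$ as a finite product of finite separable extensions of $K$, and then apply Theorem \ref{K} to one factor. The paper simply asserts the product decomposition without the primitive-element/idempotent details you supply, but the argument is the same.
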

\begin{proof}
By hypothesis, we have $A\otimes_{K}(K\otimes_{k}{L})={(K\otimes_{k}{L})}^{[2]}$. Since $L$ over $k$ is a finite separable extension, $K\otimes_{k}{L}$ is a finite direct product of separable extensions $L_i$ over $K$. Hence, we have $A\otimes_{K}{L_i}={L_i}^{[2]}$ (for each $i$), which implies $A=K^{[2]}$ by Theorem \ref{K}.
\end{proof}

We now show that $\A^2$-forms are $\A^2$-fibrations.

\begin{lem}\label{fibration}
Let $k$ be a field of characteristic zero, $R$ be a $k$-algebra 
and $A$ be an $R$-algebra. Let $A$ be an $\A^2$-form over $R$ with respect to $k$. 
Then $A$ is an $\A^2$-fibration over $R$.
\end{lem}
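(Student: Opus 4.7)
The plan is classical faithfully flat descent from the algebraic closure $\bar{k}$. Let $\bar{R} := R \otimes_k \bar{k}$ and $\bar{A} := A \otimes_k \bar{k}$, so that by hypothesis $\bar{A} = \bar{R}[X,Y]$ is a polynomial ring in two variables $X, Y \in \bar{A}$. Since $\bar{k}$ is the filtered union of its finite subextensions of $k$, the coordinates $X, Y$ already lie in $A \otimes_k k'$ for some finite extension $k'/k$ inside $\bar{k}$. Setting $R' := R \otimes_k k'$ and $A' := A \otimes_k k'$, I would observe that the inclusion $R'[X,Y] \hookrightarrow A'$ becomes an equality after the faithfully flat base change $\bar{k}/k'$ (both sides then equal $\bar{A}$), whence $A' = R'[X,Y] = R'^{[2]}$. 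This trivialisation at the finite level is the engine driving the three verifications below.

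From $A' = R'^{[2]}$, conditions (i) and (ii) are essentially routine descent. For finite generation, I would choose a $k$-basis $\{\alpha_1, \ldots, \alpha_n\}$ of $k'$, write $X = \sum a_j \otimes \alpha_j$ and $Y = \sum b_j \otimes \alpha_j$ with $a_j, b_j \in A$, and note that the subalgebra $B := R[a_1, \ldots, a_n, b_1, \ldots, b_n] \subseteq A$ satisfies $B \otimes_k k' = R'[X,Y] = A'$; faithfully flat descent through $k'/k$ then forces $B = A$. Flatness of $A$ over $R$ I would deduce from flatness of the polynomial ring $\bar{A}$ over $\bar{R}$ together with faithful flatness of $\bar{R}$ over $R$, via the standard Tor-base-change isomorphism applied to a free resolution of any test $R$-module.

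The substantive step is (iii). I would fix a prime $P$ of $R$ with residue field $\kappa := k(P)$ and tensor the identity $A' = R'[X,Y]$ with $\kappa$ over $R$ to obtain
\[ (A \otimes_R \kappa) \otimes_k k' \;=\; (R' \otimes_R \kappa)[X,Y] \;=\; (\kappa \otimes_k k')^{[2]}. \]
Lemma \ref{over field}, applied with base field $k$, top field $\kappa$, and the finite separable extension $k'/k$, then yields $A \otimes_R \kappa = \kappa^{[2]}$, completing the verification that $A$ is an $\A^2$-fibration over $R$.

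The main obstacle, and the only delicate point, is ensuring that the finite extension $k'/k$ over which the form is trivialised is separable over $k$, since Lemma \ref{over field} (and through it Kambayashi's Theorem \ref{K}) requires this in the final step. In the characteristic-zero setting where this lemma will ultimately be applied (Theorem \ref{one-dim}), every finite algebraic extension of $k$ is separable, so no additional hypothesis intervenes and the descent goes through cleanly.
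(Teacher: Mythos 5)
Your proof is correct and takes essentially the same route as the paper's: descend the coordinates $X,Y$ to a finite subextension, use faithfully flat descent to get finite generation and flatness, and apply Lemma \ref{over field} fibrewise via Theorem \ref{K}. The separability caveat you flag at the end is also present (only implicitly) in the paper's own proof, and is harmless since the lemma is applied exclusively in characteristic zero.
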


\begin{proof}
Let $A\otimes_{k}{\bar{k}}={(R\otimes_{k}{\bar{k}})}[X,Y]$, where $\bar{k}$ is an algebraic closure of $k$. 
Let $X=\underset{i=0}{\overset{n}\sum}{a_i\otimes{{\lambda}_i}}$ and  $Y=\underset{i=0}{\overset{m}\sum}{b_i\otimes{{\mu}_i}}$, 
where $a_i,b_i\in{A}$ and ${\lambda}_i,{\mu}_i\in{\bar{k}}$. Then $R[a_1,\dots,a_n,b_1,\dots,b_m]\subseteq{A}$ and the induced map  $R[a_1,\dots,a_n,b_1,\dots,b_m]\otimes_{k}{\bar{k}}\longrightarrow{A\otimes_{k}{\bar{k}}}$ is an isomorphism. Hence $\bar{k}$ being faithfully flat over $k$, we have $A=R[a_1,\dots,a_n,b_1,\dots,b_m]$. Thus $A$ is a finitely generated $R$-algebra. 
Again, as $A\otimes_{k}{\bar{k}}$ is faithfully flat over $R\otimes_{k}{\bar{k}}$ and 
$\bar{k}$ is faithfully flat over $k$, $A$ is flat over $R$. Now it suffices to show 
$A\otimes_{R}{k(\p)}=k(\p)^{[2]}$, for each prime ideal $\p$ of $R$. 

Let $\p$ be an arbitrary prime ideal of $R$. By hypothesis there exists a 
finite separable extension $L$ of $k$ such that 
$A\otimes_{k}{L}=(R\otimes_{k}{L})^{[2]}$. Hence, 
$k(\p)\otimes_{R}({A}\otimes_k{L}) =k(\p)\otimes_{R}(R\otimes_{k}{L})^{[2]} =(k(\p)\otimes_{k}{L})^{[2]}$. 
Hence by Lemma \ref{over field}, $A\otimes_{R}{k(\p)}=k(\p)^{[2]}$. 

Thus, $A$ is an $\A^2$-fibration over $R$.
\end{proof}

%As a consequence, we have the following corollary to Theorem \ref{kahoui}. 

\noindent
We now extend Theorems \ref{K} and \ref{AKD2} to any one-dimensional Noetherian ring containing a field of characteristic zero.

\begin{thm}\label{one-dim}
Let $k$ be a field of characteristic zero and $R$ a one-dimensional Noetherian $k$-algebra. 
If $A$ is an $\A^2$-form over $R$ with respect to $k$,
then there exists a finitely generated rank one projective $R$-module $Q$ such that $A \cong ({\rm Sym}_{R}(Q))^{[1]}$.   
\end{thm}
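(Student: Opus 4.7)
The plan is to realize $A$ as a separable $\A^1$-form over a polynomial extension of $R$ via Theorem \ref{SMB&A}, apply Theorem \ref{AKD1} there to write $A \cong Sym_{R[H]}(P)$, and then use Galois descent to show the rank-one projective module $P$ is extended from $R$.

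First, by Lemma \ref{fibration}, $A$ is an $\A^2$-fibration over $R$, so Theorem \ref{SMB&A} yields $H \in A$ such that $A$ is an $\A^1$-fibration over $R[H]$. A dimension count forces $H$ to be transcendental over $R$: otherwise $\dim R[H] \leq 1$, and the $\A^1$-fibration structure would give $\dim A \leq 2$, contradicting $\dim A = \dim R + 2 = 3$. Hence $R[H] \cong R^{[1]}$.

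Next, choose a finite Galois extension $L/k$ trivialising $A$, and set $R' := R \otimes_k L$ and $H' := H \otimes 1$. By flat base change, $A \otimes_k L$ is an $\A^1$-fibration over $R'[H']$; since $A \otimes_k L = R'^{[2]}$, Lemma \ref{residual} yields $A \otimes_k L = R'[H']^{[1]} = (R[H] \otimes_k L)^{[1]}$. This displays $A \otimes_k L$ as the symmetric algebra of the free rank-one module over $R[H] \otimes_k L$, so Theorem \ref{AKD1}, applied with the base ring $R[H]$ in place of $R$, produces a finitely generated rank-one projective $R[H]$-module $P$ such that $A \cong Sym_{R[H]}(P)$.

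The main obstacle will be to descend $P$ from $R[H]$ to $R$. The key observation is that $R'$ is reduced (since $L/k$ is separable, so $R'$ is étale over $R$), which forces $(R'[H'])^* = R'^*$. Because $P \otimes_{R[H]} R'[H']$ is free of rank one, the Galois descent cocycle classifying $P$ along the extension $R[H] \hookrightarrow R'[H']$ takes values in $(R'[H'])^* = R'^*$, and therefore descends to a rank-one projective $R$-module $Q$ (becoming free over $R'$) with $P \cong Q \otimes_R R[H]$. Substituting back,
\[
A \cong Sym_{R[H]}(Q \otimes_R R[H]) \cong Sym_R(Q) \otimes_R R[H] \cong (Sym_R(Q))^{[1]},
\]
which is the desired conclusion.
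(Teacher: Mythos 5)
Your proof is correct, and its skeleton coincides with the paper's up to the last step: Lemma \ref{fibration}, then Theorem \ref{SMB&A} to get $H$, then Lemma \ref{residual} after a separable base change to see $A\otimes_k L=(R[H]\otimes_k L)^{[1]}$, then Theorem \ref{AKD1} over the base $R[H]$ to get $A\cong Sym_{R[H]}(P)$. Where you genuinely diverge is in descending $P$ from $R[H]$ to $R$: the paper simply quotes a lemma of Itoh (\cite{I}, Section 2, Lemma 1) to produce $Q$ with $P\cong Q\otimes_R R[H]$, whereas you run Galois descent along $R[H]\hookrightarrow R'[H']$, using reducedness of $R'$ to get $(R'[H'])^{*}=(R')^{*}$ and hence to identify $H^1(G,(R'[H'])^{*})$ with $H^1(G,(R')^{*})=\ker(\mathrm{Pic}(R)\to\mathrm{Pic}(R'))$. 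This is a legitimate, self-contained substitute: it only needs that line bundles on $R[H]$ split by the Galois extension are extended from $R$, and avoids any appeal to surjectivity of $\mathrm{Pic}(R)\to\mathrm{Pic}(R[H])$ (which in general requires seminormality of $R$). Two points you should make explicit to close the argument: (i) the freeness of $P\otimes_{R[H]}R'[H']$ does not follow formally from the statement of Theorem \ref{AKD1}; justify it by noting that $Sym_{R'[H']}(P\otimes_{R[H]}R'[H'])\cong R'[H']^{[1]}$ as $R'[H']$-algebras, so the module of K\"ahler differentials over $R'[H']$ is free of rank one, and restricting it along the augmentation recovers $P\otimes_{R[H]}R'[H']$; (ii) verify that $R'[H']$ is $G$-stable with $(R'[H'])^{G}=R[H]$ and that $R'[H']$ is a $G$-Galois extension of $R[H]$ (via $L\otimes_k L\cong\prod_{\sigma\in G}L$), so the cocycle description of $\ker(\mathrm{Pic}(R[H])\to\mathrm{Pic}(R'[H']))$ applies. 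As a minor bonus, your use of a finite Galois $L$ in the Lemma \ref{residual} step keeps the base ring visibly Noetherian, whereas the paper base-changes to $R\otimes_k\bar{k}$.
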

\begin{proof}
By Lemma \ref{fibration}, $A$ is an $\A^2$-fibration over $R$ and hence by Theorem   \ref{SMB&A}, 
there exists $H\in{A}$ such that $A$ is an $\A^1$-fibration over $R[H]$. 
Let $\bar{k}$ be an algebraic closure of $k$, 
$\bar{A}:=A\otimes{\bar{k}}$ and $\bar{R}:=R\otimes{\bar{k}}$. 
Since $\bar{A}=\bar{R}^{[2]}$ and $\bar{A}$ is an $\A^1$-fibration over $\bar{R}[H]$, 
we have $\bar{A}=\bar{R}[H]^{[1]}$ by Lemma \ref{residual}. 
Thus by Theorem \ref{AKD1}, $A \cong {\rm Sym}_{R[H]}({Q_1})$, for some finitely generated rank one projective $R[H]$-module $Q_1$. 
Set $R_{\text{red}}:= R/nil(R)$. Now
$$
A/nil(R)A \cong {\rm Sym}_{R[H]}({Q_1})\otimes_R R_{\text{red}}={\rm Sym}_{R_{\text{red}}[H]}({Q_1}\otimes_R R_{\text{red}}).
$$
Now by (\cite{I}, Section $2$, Lemma $1$), there exists a finitely generated rank one projective   
$R_{\text{red}}$-module $Q'$ such that $Q_1\otimes_R (R_{\text{red}})=Q'\otimes_{R_{\text{red}}} R_{\text{red}}[H]$. 
Thus, 
$$
A/nil(R)A \cong {{\rm Sym}_{R_{\text{red}}}{(Q')}}\otimes_{R_{\text{red}}}{R_{\text{red}}[H]}=
({\rm Sym}_{R_{\text{red}}}(Q'))^{[1]}.
$$
Now by \cite[Proposition 2.3.5]{IR}, there exists a finitely generated rank one projective $R$-module $Q$ such that
$Q\otimes_R R_{\text{red}} =Q'$ and hence $A= ({\rm Sym}_{R}(Q))^{[1]}$. 
\end{proof}

The following result shows that under the additional hypothesis that 
$A$ has a fixed point free locally nilpotent $R$-derivation, Theorem \ref{one-dim} can be extended to any ring containing a field of characteristic zero. 

\begin{thm}\label{nonregular}
Let $k$ be a field of characteristic zero, $R$ a ring containing $k$ and $A$ be an $\A^{2}$-form over $R$ with respect to $k$. 
Suppose $A$ has a fixed point free locally nilpotent $R$-derivation.
Then there exists a  finitely generated rank one projective $R$-module $Q$ such that $A \cong ({\rm Sym}_{R}(Q))^{[1]}$.  
\end{thm} 
\begin{proof}
%By Lemma \ref{fibration}, we have $A$ is an $\A^2$-fibration over $R$ and hence by a well known result on affine fibration by Asanuma \cite[Corollary 3.5]{A87}, 
%$A$ is a locally stably trivial $\A^{2}$-fibration over $R$. Hence by Theorem \ref{kahoui}, $A=(Sym_{R}(Q))^{[1]}$ for some rank one projective $R$-module $Q$. 
Let $L$ be a finite extension of $k$ such that $A\otimes_k L= (R\otimes_k L)^{[2]}$. 
Let $D$ be a fixed point free locally nilpotent $R$-derivation of $A$ and $B= {\rm ker}~D$.
Set $\bar{R}:=R\otimes_k L$, $\bar{A}:=A\otimes_k L$, $\bar{B}:=B\otimes_k L$ and $\bar{D}:=D\otimes 1_L$. Then 
$\bar{A}= \bar{R}^{[2]}$, ${\rm ker~} \bar{D} = \bar{B}$ and
$\bar{D}$ is a fixed point free locally nilpotent derivation of $\bar{A}$. Hence, by Theorem \ref{BD}, $\bar{D}$ has a slice and 
$\bar{B}= \bar{R}^{[1]}$. 
Now, by Theorem \ref{slice}, $D(\bar{A})=\bar{A}$. Thus, $D(A)\otimes_{k}L=D(\bar{A})=\bar{A}=A\otimes_{k}L$. Hence, by faithful flatness of $L$ over $k$, $D(A)=A$. So ${A}= {B}^{[1]}$ by Theorem \ref{slice}. Since $\bar{B}= \bar{R}^{[1]}$, by Theorem \ref{AKD1}, 
$B={\rm Sym}_R(Q)$
and hence $A=({\rm Sym}_{R}(Q))^{[1]}$,
for some finitely generated rank one projective $R$-module $Q$. 
\end{proof}

\begin{rem}
\em {M.E. Kahoui and M. Ouali have shown (\cite[Corollary 3.2]{KO}) that when $R$ is regular the above result holds for any 
$\A^{2}$-fibration over $R$.} 
\end{rem}

%Let $W \in \bar{A}$ be such that $\bar{D}(W)=1$.
%Suppose, $W= \underset{i=0}{\overset{m}{\sum}}{a_i \otimes c_i}$, where $a_i \in A$ and $c_i \in L$. We may assume that
%$c_i's$ are  linearly independent over $k$ and $c_1\in k$. Now, 
%$1= \bar{D}(W)= \underset{i}\sum(D(a_i) \otimes c_i)$, which implies that 
%$D(a_1) c_1=1$ and $D(a_i)=0$ for $i>1$. 
%Hence $D(a_1/c_1)=1$, i.e., $D$ has a slice and 
\begin{center}
{ ACKNOWLEDGEMENTS}
\end{center}
The authors thank S. M. Bhatwadekar for the current version of Theorem \ref{nonregular} and also for his useful comments and suggestions. The third author also acknowledges
Council of Scientific and Industrial Research (CSIR) for their grant.

\small

\end{document}